\renewcommand\thesection{\Roman{section}} 
\renewcommand\thesubsection{\roman{subsection}} 
\titleformat{\section}[block]{\large\scshape\centering}{\thesection.}{1em}{} 
\titleformat{\subsection}[block]{\large}{\thesubsection.}{1em}{} 
\newcommand{\myul}[2][black]{\setulcolor{#1}\ul{#2}\setulcolor{black}}
\theoremstyle{definition}
\newtheorem{thm}{Theorem}
\newtheorem{lem}{Lemma}
\newtheorem{Cor}{Corollary}
\newtheorem{Prop}{Proposition}
\begin{document}
\justifying
\begin{center}
{\Large  \textbf {Binomial Coefficients in a Row of Pascal’s Triangle from Extension of Power of Eleven: Newton's Unfinished Work}}\\[4.5ex]
{\bf Md. Shariful Islam}\textsuperscript{1}  {\bf Md. Robiul Islam}\textsuperscript{3} {\bf Md.  Shorif Hossan}\textsuperscript{2}  and {\bf Md.  Hasan Kibria}\textsuperscript{1} \\[0.65ex]
\normalsize{\textsuperscript{1}Department of Mathematics, University of Dhaka, Bangladesh}\\[0.2ex]
\normalsize{\textsuperscript{2}Department of Applied Mathematics, University of Dhaka, Bangladesh}\\[0.2ex]
\normalsize{\textsuperscript{3}Department of Computer Science and Engineering, Green University of Bangladesh, Dhaka, Bangladesh}\\[0.35ex]
\normalsize{ {\bf E-mail:} \texttt{mdsharifulislam@du.ac.bd}, \texttt{robiul258@gmail.com}, \texttt{shorif@du.ac.bd}, and \texttt{hasan-2017713694@math.du.ac.bd}}\\[0.55ex]
\hspace{-2.0in}\textit{\textsuperscript{$\ast$}Author for correspondence}: robiul258@gmail.com

\end{center}
\begin{abstract}
\noindent The aim of this paper is to find a general formula to generate any row of Pascal's triangle as an extension of the concept of $\left(11\right)^{n}$. In this study, the visualization of each row of Pascal’s triangle has been presented by extending the concept of the power of 11 to the power of 101, 1001, 10001, and so on. We briefly discuss how our proposed concept works for any $n$ by inserting an appropriate number of zeros between $1$ and $1$ (eleven), that is the concept of $\left(11\right)^{n}$ has been extended to $\left(1\Theta1\right)^{n}$, where $\Theta$ represents the number of zeros. We have proposed a formula for obtaining the value of $\Theta$. The proposed concept has been verified with Pascal's triangle and matched successfully. Finally,   Pascal's triangle for a large n has been presented considering the $51^{\text{st}}$ row as an example.
\end{abstract}

{\bf Keywords:} Binomial Coefficients, Pascal's triangle, Logarithm, Modular Arithematic.

\section{Introduction} \label{sec:intro}
Algebra is a spacious part of the science of mathematics which provides the opportunity to express mathematical ideas precisely. In algebra, the Binomial expansion and Pascal's triangle are considered important.
Pascal's triangle is an arrangement of the binomial coefficients and one of the most known integer models.
Though it was named after French scientist Blaise Pascal, it was studied in ancient India, Persia, China, Germany, and Italy \cite{edwards}.\\[2ex]
In reality, the definition of the triangle was made centuries ago. In 450 BC, an Indian mathematician named Pingala is said to have introduced the definition of this triangle in a Sanskrit poetry book. At the same time, the commentators of this book acquainted with the diagonal surface of the triangle, which is the sum of the Fibonacci numbers. Chinese mathematicians had the same idea and named the triangle as ``Yang Hui's triangle”. Later, Persian mathematician Al-Karaji and Persian astronomer-poet Omar Khayyam named the triangle as the “Khayyam triangle”.
It also has multi-dimensional shapes, the three-dimensional shape is referred to as Pascal's pyramid or Pascal's tetrahedron, while the other general-shaped ones are called Pascal's simplifications.\\[2ex] 
Various studies have been conducted in many different disciplines about Pascal's triangle. For the construction of Pascal's triangle, Sgroi \cite{Sgroi} stated that each line starts with 1 and ends with 1, and this series can be expanded with simple cross-joints. Jansson \cite{jansson} developed three geometric forms related to Pascal's triangle and included examples on each form. Toschi \cite{toschi} used various permutations to generate new forms of Pascal's triangles and generalized them. Duncan and Litwiller \cite{duncan} addressed the reconstruction of Pascal's triangle with the individuals. Here they collected data on the opinions of individuals using qualitative methods, and determined the methods of constructing the Pascal's triangle in different ways with the attained findings.\\[2ex]
Researches worked on Pascal's fascinating characteristics.  Using the principle of permutation, Putz \cite{putz} designed the Pascal Polytope and linked it to the Fibonacci concept. Houghton \cite{houghton} gave the concept of the relationship between  successive differential operation of a function and Pascal's triangle. With an application, he attempted to incorporate the idea of a differentiable function into Pascal's triangle. The relationship between Pascal's triangle and the Tower of Hanoi has been elucidated by Andreas M Hinz \cite{hinz}. Finding diagonal sum \cite{hoggatt}, k-Fibonacci sequence \cite{falcon}, recurrence relations \cite{green}, finding exponential $(e)$ \cite{brothers} were a part of those to describe the work that generates from the Pascal's triangle. Some fascinating properties of Pascal’s triangle are available in \cite{Bondarenko, Korec}. In 1956, Freund \cite{Freund} elicited that the generalized Pascal's triangles of $s^\text{th}$  order can be constructed from the generalized binomial coefficients of order $s$. Bankier \cite{bankier} gave the  Freud’s alternative proof. Kall{\'o}s  generalized Pascal's triangle from algebraic point of view by different bases. He tried to generalize Pascal's triangle using the power of integers \cite{Kall_1}, powers of base numbers \cite{Kall_2} and their connections with prime number \cite{Farkas}. kuhlmann tried to generate Pascal's triangle using the T-triangle concept \cite{kuhlmann}.\\[2ex]
The concept of the power of $11$ was first introduced by Sir Isaac Newton. He noticed that first five  rows of Pascal's triangle are formed by the power of $11$ and claimed (without proof) that subsequent rows can also be generated by the power of eleven as well \cite{newton1736}.  Arnold \textit{et al.} \cite{Arnold} showed if one assigns  a place value to each of the individual terms in a certain row of the triangle, the pattern can be seen again. Morton \cite{Morton} noted the Pascal's triangle property by the power of 11 for 10 base numerals system. Mueller \cite{Mueller} noted that one can get the $n^\text{th}$ power of $11$ from the $n^\text{th}$ row of the Pascal's triangle with positional addition.\\[2ex]
It is clearly concluded that above mentioned works did not express the full row of Pascal's triangle from the power of $11$, or some variant of it, as Sir Isaac Newton hinted . This paper has worked on the generalization of Pascal's triangle by extending the power of eleven idea. Here, we have extend the concept of power of $11$ to the power of $101, 1001, 10001, \ldots$ and proved a general formula to achieve any row of Pascal's triangle. Using our formula one can generate any row of Pascal's triangle, regardless of the number of rows one can imagine.

\section{Methods} \label{sec: Methods}
The very basic definition to get any element of a row of the Pascal's triangle is the summation of two adjacent elements of the previous row. Each number in Pascal's triangle is the sum of two numbers above that number. Usually, the lines of Pascal's triangle are numbered starting from $n = 0$ on the top and the numbers in each line are starting from $k = 0$ on the left. For $k=0$, their is only one value 1. As the next lines are created, The remaining right most and left most element for new row is 1.

\[
\begin{array}{ccccccccccccc} {}  & {} & {} &{}  & {} & {} & {1} & {}  & {} & {} &{}  & {} & {}  \\ {}  & {} & {} &{}  & {} & {1} & {} & {1}  & {} & {} &{}  & {} & {} \\ {}  & {} & {} &{}  & {1} & {} & {2} & {}  & {1} & {} &{}  & {} & {} \\ {}  & {} & {} &{1}  & {} & {3} & {} & {3}  & {} & {1} &{}  & {} & {}\\ {}  & {} & {1} &{}  & {4} & {} & {6} & {}  & {4} & {} &{1}  & {} & {}\\ {}  & {1} & {} &{5}  & {} & {10} & {} & {10}  & {} & {5} &{}  & {1} & {}\\ {1}  & {} & {6} &{}  & {15} & {} & {20} & {}  & {15} & {} &{6}  & {} & {1} \\  {}  & {} & {\ldots} &{}  & {} & {} & {\ldots} & {}  & {} & {} &{\ldots}  & {} & {} \end{array}
\]
\newline
\vspace{0.5cm}
 \hspace{5cm}\textbf{Figure 1:} Pascal's triangle\\[1ex]

The concept of the power of $11$ leads to us $11^{1}=11$, $1^{\text{st}}$ row of Pascal's triangle and so $11^{2}=121$, $11^{3}=1331$ and $11^{4}=14641$ reveal $2^{\text{nd}}$, $3^{\text{rd}}$ and $4^{\text{th}}$ row respectively. Before finding the general rule for subsequent rows, we first elaborate the previous concept of power 11. The reason behind getting Pascal's triangle by the power of 11 lies on the general rule of multiplication.   What do we get from multiplication of a number by $11?$\\

\[
\begin{array}{@{}r@{}}
2^{nd}\text{ row of Pascal's triangle} \rightarrow 121\\
{}\times 11\\
\hline
1${\color{red}2}$1 \\
\text{left shift of all digits by 1 place} \rightarrow 12${\color{red}1}$0 \\
\hline
3^{rd}\text{ row of Pascal's triangle} \rightarrow  13$\myul[red]{3}$1\\
\end{array}
\]
\newline
\vspace{0.5cm}
\hspace{4cm} \textbf{Figure 2:} Results after multiplication by $11$\\[1ex]

Figure 2 shows that multiplication of a number by $11$ gives an output which is the sum of the two adjacent numbers of previous row of Pascal's triangle. 
\\\\
Patently $11^{5}=161051$ and  $11^6=1771561$, but the $5^{\text{th}}$ and $6^{\text{th}}$ row of Pascal's triangle are 
\begin{center}
 {1} { } { } {5}  { } { }  {10} { } { } {10}  { } { }  {5} { } { } {1}  {} \\
 {}   { } { }  {}   { } { }  {}  { } { }  {} { } { }  {}  { $\qquad~~\text{and}$} { }   {}  { } { }   {}  {}  \\
   {1}   { } { }  {6}   { } { }  {15}  { } { }  {20}    { } { }  {15}  { } { }   {6}  { } { }   {1}  {}\\
   \end{center}
respectively. The above scheme fails for $11^5$ or $11^6$. Why are we not getting the $5^{th}$ row or why does the power of $11$ fail here? The answer is the middle values of the $5^{\text{th}}$ row of Pascal's triangle are two digit numbers, whereas the power of $11$ represents Pascal's row as a representation of one decimal place. So for finding $5^{\text{th}}$ or any row onward, we need a formula that can represent the number generated from the power of 11 have two or more digits. Now, we will endeavor to formulate a specific rule that generates the required number of digits for the representation of a row of the Pascal's triangle.\vspace{0.2cm}
\newline
At first, we attempt to  generate the number of two digits using the very basic rules of multiplication. Figure 3, displays the impact of multiplication by $101.$\\[2ex]

 \[
\begin{array}{@{}r@{}}
101 \\
{}\times 101\\
\hline
${\color{red}1}$01\\
\text{zeros cause the left shift of all digits by 1 place} \rightarrow $~{\color{red}00}$00\\
\text{left shift of all digits by 2 places} \rightarrow 1${\color{red}01}$00\\
\hline
1$\myul[red]{02}$01\\
{}\times 101\\
\hline
${\color{blue}1}{\color{green}02}$01\\
${\color{blue}00}{\color{green}00}$00\\
\text{left shift of all digits by 2 places} \rightarrow 1${\color{blue}02}{\color{green}01}$00\\
\hline 
1$\myul[blue]{03}\myul[green]{03}$01\\ 
\end{array}
\]
\vspace{0.5cm}
\hspace{4cm}  \textbf{Figure 3:} Results after multiplication by $101$\\[1ex]
The underlined numbers are same as the summation of two adjacent numbers of the previous row, but multiplication by 101 displays the rows as a representation of two digits number.\\[1.05ex]
Now, $101^{5}=10510100501$, from which we can construct $5^{\text{th}}$ row of Pascal's triangle by omitting extra zeros and separating the digits.
\begin{center}
 {1}  {} {5}   {}  {10}  {} {10}   {}  {5} {}   {1}  {}   
\end{center}
Similarly from $101^{6}=1061520150601$ and $101^{7}=107213535210701$, we can easily construct $6^{\text{th}}$ and $7^{\text{th}}$ row respectively.\\[0.1ex]
\begin{center}
{1}   { } { }  {06}   { } { }  {15}  { } { }  {20}    { } { }  {15}  { } { }   {06}  { } { }   {01}  {}\\
 { $\quad\text{and}$}\\
{1}   { } { }  {07}   { } { }  {21}  { } { }  {35}    { } { }{35}  { } { }  {21}  { } { }   {07}  { } { }   {01}  {}
\end{center}
$101^{5}$, $101^{6}$ and $101^{7}$ all are representing $5^{th}$, $6^{th}$ and $7^{th}$ row of Pascal's triangle respectively as a representation of two digit numbers due to the insertion of one zero between $1$ and $1~$ in 11 such that $101$. $11^5=161051$, $11^6 = 1771561$ and $11^7=19487171$ could also represent the respective rows according to the Newton's claim but $101^n$ makes it precise.\\ 
Can a conclusion be drawn for generating any row of Pascal's triangle with the help of extended concept of the power of 11 such as $101^n$? 
The $9^{\text{th}}$ row of Pascal's triangle is 
$$1~~ 9~~ 36 ~84~~ 126~~ 126 ~~84 ~~36~ 9~~ 1$$
Plainly, $101^9 =1093685272684360901$ does not give the $9^{\text{th}}$ row because of the central element of this row contains three digits.

So the representation of three decimal places for each entry of Pascal's triangle requires a new formula to be  generated.
The previous context directed that multiplication of a number by $11$ and $101$ makes the left shift of all digits by one and two places respectively. Therefore three digits representation requires multiplication by  $1001$.\\[1.25ex]
Figure 4, proofs the left shift of all digits by  $3$ times when a number is multiplied by  $1001.$\\

\[
\begin{array}{@{}r@{}}
1001 \\
{}\times 1001\\
\hline
${\color{red}1}$001\\
${\color{red}00}$000\\
${\color{red}000}$000\\
\text{left shift of all digits by 3 time} \rightarrow 1${\color{red}001}$000\\
\hline
1$\myul[red]{002}$001\\ 
{}\times 1001\\
\hline
${\color{blue}1}{\color{green}002}$001\\
${\color{blue}00}{\color{green}000}$000\\
${\color{blue}000}{\color{green}000}$000\\
\text{left shift of all digits by 3 time} \rightarrow 1${\color{blue}002}{\color{green}001}$000\\
\hline
1$\myul[blue]{003}\myul[green]{003}$001 
\end{array}
\]
\vspace{0.5cm}
\hspace{3.5cm}  \textbf{Figure 4:} Results after multiplication by $1001$\\[1ex]

By continuing the multiplication by 1001 in Figure 4, we get $$1001^9 =1009036084126126084036009001$$ 
from which one may form the $9^{th}$ row
of Pascal's triangle by partitioning the digits of the number from the right three digits in each partition. 
\begin{center}
   $1~~009~~036~~084~~126~~126~~084~~036~~009~~001$
\end{center}
Similarly,  $(1001)^{10}=1010045120210252210120045010001$, 
$$1010045120210252210120045010001\longmapsto 1~~010~~045~~120~~210~~252~~210~~120~~045~~010~~001$$
the $10^{\text{th}}$ row of the Pascal's triangle.
\section{Results and discussion}
From the above study, it may be concluded that the representation of three decimal places requires the left shift of all digits by three places, and the left shift of all digits requires two zeros between $1$ and $1~$ in 11, that is $1001$. Why do we require three decimal places representation for $9^{th}$ and $10^{th}$ rows of Pascal's triangle? Because the central elements of  $9^{\text{th}}$ and $10^{\text{th}}$ rows are of three digits. Similarly, we need two digits representation for $5^{th}$ to $8^{th}$ rows since the central element of these rows are numbers of two digits. And, the first four rows satisfy $11^n$ since the central element of the first four rows  contains one digit only. So for any row, the number of decimal places representation should be equal to the number of digits in the central value of that row.\\[1.5ex]
The above discussion compels to generate a formula to find the central value of any row of the Pascal's triangle. 
For an odd number, say $n=9,$ we get $n+1=10$ elements in $9^{th}$ row and so the central value should be $\left(\frac{10}{2}\right)^{th}=5^{th}$ observation of that row, which is $\binom{9}{5-1}=\binom{9}{4}=126$. 
For an even number, say $n=10$ we get $n+1=11$ elements and the central value should be $\lceil\frac{11}{2}\rceil^{th}=6^{th}$  observation, which is $\binom{10}{6-1}=\binom{10}{5}=252$.\\[1.25ex]
By taking the \textit{floor} value of $\frac{n}{2}$, a formula for central value of $n^{th}$ row is $\binom{n}{\lfloor\frac{n}{2}\rfloor}$. But we never need a central element rather it is necessary to know how many digits the central element has. Applying the property of Logarithmic function, one can identify how many digits (or decimal places) of the central element has without knowing it.
Therefore the number of digits in the central value is given by
$$\lceil\log_{10} \binom{n}{\lfloor\frac{n}{2}\rfloor}\rceil$$ 
Since $\lceil\log_{10} (X)\rceil$ represents the number of digits of $X$ when $X \ne 10^{n}$, for $n \in \mathbb{N}$. For a central value of $d$ decimal places we require  $d-1$ zeros between $1~\text{and}~1~$ in 11, such that $\left(1~\left(d-1\right)~zeros~1\right)^{n}$. So, the required number of zeros between $1~\text{and}~1~$ in 11 can be obtained by taking the \textit{floor} value of $\log_{10} \binom{n}{\lfloor\frac{n}{2}\rfloor}$.\\
If $\Theta$ represents the number of zeros between $1$ and $1~$ in 11. Then $$\Theta=\lfloor\log_{10} \binom{n}{\lfloor\frac{n}{2}\rfloor}\rfloor$$. We now verify it for an odd number $n=9$ and an even number $n=10$.\\ [1ex]
If, $n=9$ then $\Theta=2$, and if $n=10$ then $\Theta=2$.\\[1ex] 
For both of the numbers we need $2$ zeros between  $1$ and $1~$ in 11. So, to get $9^{th}$ and $10^{th}$ rows we have to calculate $1001^{9}$ and $1001^{10}$ respectively. Both of these cases have been discussed above.\\[1ex]
It's time to generate the formula to find any row of Pascal's triangle. The general formula for generating $n^{th}$ row of Pascal's triangle is $(1\Theta1)^n$.
For a random number such as $n=15$ we get $\Theta=3$.\\[1ex] 
So, we have to insert 3 zeros and the $15^{th}$ row can be constructed from the following 
$$10001^{15}= 
1001501050455136530035005643564355005300313650455010500150001$$
Partitioning the digits from the right four digits at a time as shown below
$$1~0015~0105~0455~1365~3003~5005~6435~6435~5005~3003~1365~0455~0105~0015~0001$$
Notice the partitioning yields the $15^{\text{th}}$ row of the Pascal's triangle
$$1~15~105~455~1365~3003~5005~6435~6435~5005~3003~1365~455~105~15~1$$
Similarly, we may verify for $n=16$, $\Theta=4$ and\\[2ex]
$(100001)^{16} =
\seqsplit{%
100016001200056001820043680800811440128701144008008043680182000560001200001600001} 
$\\[2ex]
This $16^\text{th}$ row can also be verified from the existing Pascal's triangle. The above formula can be used for a large $n$. We now exemplify $51^{st}$ row of Pascal's triangle. Hence $n=51$ gives $\Theta=14$\\[2ex]
We have to put $14$ zeros between $1~\text{and}~1~$ in 11, that is
$(1000000000000001)^{51}$.\\[2ex]  
$(1000000000000001)^{51} =
1
{\color{red}000000000000051}
{\color{blue}000000000001275}
{\color{red}000000000020825}\\
{\color{blue}000000000249900}
{\color{red}000000002349060}
{\color{blue}000000018009460}
{\color{red}000000115775100}
{\color{blue}0000006367630}\\
{\color{blue}50}
{\color{red}000003042312350}
{\color{blue}000012777711870}
{\color{red}000047626016970}
{\color{blue}000158753389900}
{\color{red}00047626016}\\
{\color{red}9700}
{\color{blue}001292706174900}
{\color{red}003188675231420}
{\color{blue}007174519270695}
{\color{red}014771069086725}
{\color{blue}027900908}\\
{\color{blue}274925}
{\color{red}048459472266975}
{\color{blue}077535155627160}
{\color{red}114456658306760}
{\color{blue}156077261327400}
{\color{red}1967930}\\
{\color{red}68630200}
{\color{blue}229591913401900}
{\color{red}247959266474052}
{\color{blue}247959266474052}
{\color{red}229591913401900}
{\color{blue}19679}\\
{\color{blue}3068630200}
{\color{red}156077261327400}
{\color{blue}114456658306760}
{\color{red}077535155627160}
{\color{blue}048459472266975}
{\color{red}027}\\
{\color{red}900908274925}
{\color{blue}014771069086725}
{\color{red}007174519270695}
{\color{blue}003188675231420}
{\color{red}001292706174900}
{\color{blue}0}\\
{\color{blue}00476260169700}
{\color{red}00158753389900}
{\color{blue}000047626016970}
{\color{red}000012777711870}
{\color{blue}000003042312350}\\
{\color{red}000000636763050}
{\color{blue}000000115775100}
{\color{red}000000018009460}
{\color{blue}000000002349060}
{\color{red}0000000002499}\\
{\color{red}00}
{\color{blue}000000000020825}
{\color{red}000000000001275}
{\color{blue}000000000000051}
{\color{red}000000000000001}
$
\newline\newline
The desired $51^{\text{st}}$ row can be obtained by partitioning each $15$ digits from the right. For readers convenience, we marked each segment with different colors and showing that the above formula generates the $51^{\text{st}}$ row of the Pascal's triangle. Now we give a proof of the fact observed above.

We have the following inequalities
\begin{flalign}
\Theta=\lfloor\log_{10} \binom{n}{\lfloor\frac{n}{2}\rfloor}\rfloor
\end{flalign}
\begin{flalign*}
\Rightarrow 10^{\Theta+1} > \binom{n}{\lfloor\frac{n}{2}\rfloor}
\end{flalign*}
\begin{flalign}
&\Rightarrow 10^{\Theta+1}-1\ge \binom{n}{\lfloor\frac{n}{2}\rfloor}\\
&\binom{n}{\lfloor\frac{n}{2}\rfloor}\ge\binom{n}{r}\\
&10^{\Theta}\le\binom{n}{\lfloor\frac{n}{2}\rfloor}
\end{flalign}

And notice that $\left(1\Theta 1\right)^{n}=\left(10^{\Theta+1}+1\right)^{n}$.

\begin{lem}
\label{pascal1}
If $n, r\in\mathbb{N}$ and $0\le r\le n$, then
\begin{eqnarray*}
\binom{n}{n-(r-1)}10^{(r-1)(\Theta+1)}+\binom{n}{n-(r-2)}10^{(r-2)(\Theta+1)}+\cdots+1<10^{r(\Theta+1)}.
\end{eqnarray*}
\end{lem}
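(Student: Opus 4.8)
The plan is to put the left-hand side into closed summation form and then dominate it by a geometric series, using the three inequalities $(3)$, $(4)$ recorded just before the statement together with the fact that $\left(1\Theta1\right)^{n}=\left(10^{\Theta+1}+1\right)^{n}$. Write $M=10^{\Theta+1}$. Reading the displayed sum from its last term $1$ (which is $\binom{n}{n}M^{0}$) up to the first term $\binom{n}{n-(r-1)}M^{r-1}$, the quantity to be bounded is
\[
S \;=\; \sum_{j=0}^{r-1}\binom{n}{n-j}\,M^{j}.
\]
The first reduction is to replace every coefficient by the central one: since $\binom{n}{n-j}=\binom{n}{j}$ and $\binom{n}{\lfloor n/2\rfloor}$ is the largest entry of the $n$-th row (this is inequality $(4)$, applied with the running index $j$ in place of $r$), we get $\binom{n}{n-j}\le\binom{n}{\lfloor n/2\rfloor}$ for every $j$ with $0\le j\le r-1\le n-1$.

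Next I would invoke the definition of $\Theta$. From $\Theta=\lfloor\log_{10}\binom{n}{\lfloor n/2\rfloor}\rfloor$ one has $\binom{n}{\lfloor n/2\rfloor}<10^{\Theta+1}$, and since both sides are integers this sharpens to $\binom{n}{\lfloor n/2\rfloor}\le 10^{\Theta+1}-1=M-1$, which is precisely inequality $(3)$. Combining this with the previous step gives $\binom{n}{n-j}\le M-1$ for all $j$ in range, and hence
\[
S \;\le\; (M-1)\sum_{j=0}^{r-1}M^{j} \;=\; (M-1)\cdot\frac{M^{r}-1}{M-1} \;=\; M^{r}-1 \;<\; M^{r} \;=\; 10^{r(\Theta+1)},
\]
which is the assertion. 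The boundary cases are harmless: for $r=1$ the sum is just $1<10^{\Theta+1}$, and the inequality is strict throughout because the geometric sum telescopes to $M^{r}-1$.

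I do not expect a genuine obstacle here; the content is the geometric-series collapse, and the only points needing care are bookkeeping ones — correctly reindexing the sum so that the exponents $j(\Theta+1)$ match the displayed pattern, and noting that the integrality of binomial coefficients is exactly what upgrades the strict bound $\binom{n}{\lfloor n/2\rfloor}<10^{\Theta+1}$ coming from the floor in the definition of $\Theta$ to the working bound $\binom{n}{\lfloor n/2\rfloor}\le M-1$. (Conceptually, the lemma is the statement that the digit-blocks of width $\Theta+1$ contributed by $\binom{n}{k}$ for the lower indices cannot, even in aggregate, carry into block $r$, which is what will make the base-$10^{\Theta+1}$ expansion of $\left(10^{\Theta+1}+1\right)^{n}$ read off the $n$-th row of Pascal's triangle.)
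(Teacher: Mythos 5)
Your proof is correct and takes essentially the same route as the paper: bound each coefficient by the central binomial coefficient, use $\binom{n}{\lfloor n/2\rfloor}\le 10^{\Theta+1}-1$ coming from the definition of $\Theta$, and collapse the geometric series (the paper divides the bound by $10^{r(\Theta+1)}$ and compares with $1$ rather than telescoping to $M^{r}-1$, a purely cosmetic difference). One small citation slip: the maximality of the central coefficient is the paper's inequality (3) and the bound $\binom{n}{\lfloor n/2\rfloor}\le 10^{\Theta+1}-1$ is inequality (2), not (4) and (3) as you label them.
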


\begin{proof}
By inequality 3, we have
\begin{eqnarray*}
&\binom{n}{n-(r-1)}10^{(r-1)(\Theta+1)}+\binom{n}{n-(r-2)}10^{(r-2)(\Theta+1)}+\cdots+1\\
& <\binom{n}{\lfloor\frac{n}{2}\rfloor}\left(10^{(r-1)(\Theta+1)}+10^{(r-2)(\Theta+1)}+\cdots+1\right)\\
&=\binom{n}{\lfloor\frac{n}{2}\rfloor}\left(\frac{10^{r(\Theta+1)}-1}{10^{(\Theta+1)}-1}\right)
<\binom{n}{\lfloor\frac{n}{2}\rfloor}\left(\frac{10^{r(\Theta+1)}}{10^{(\Theta+1)}-1}\right).
\end{eqnarray*}
Dividing the last quantity by $10^{r(\Theta+1)}$ and using the inequality 2, we have
\begin{eqnarray*}
&\binom{n}{\lfloor\frac{n}{2}\rfloor}\left(\frac{10^{r(\Theta+1)}}{10^{(\Theta+1)}-1}\right)\left(\frac{1}{10^{r(\Theta+1)}}\right)=\binom{n}{\lfloor\frac{n}{2}\rfloor}\left(\frac{1}{10^{(\theta+1)}-1}\right)\\
&\le\binom{n}{\lfloor\frac{n}{2}\rfloor}\frac{1}{\binom{n}{\lfloor\frac{n}{2}\rfloor}}=1.
\end{eqnarray*}
Therefore 
\begin{eqnarray*}
\binom{n}{n-(r-1)}10^{(r-1)(\Theta+1)}+\binom{n}{n-(r-2)}10^{(r-2)(\Theta+1)}+\cdots+1<10^{r(\Theta+1)}.
\end{eqnarray*}
\end{proof}

\begin{Prop}
\label{pascal2}
If $n,r\in\mathbb{N}$ and $0\le r\le n$, then
\begin{flalign*}
&\left(10^{\Theta+1}+1\right)^{n}\mod 10^{r(\Theta+1)}\\
&=\binom{n}{n-(r-1)}10^{(r-1)(\Theta+1)}+\binom{n}{n-(r-2)}10^{(r-2)(\Theta+1)}+\cdots+1
\end{flalign*}
\end{Prop}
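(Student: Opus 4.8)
The plan is to expand $\left(10^{\Theta+1}+1\right)^{n}$ by the binomial theorem and then split the resulting sum according to whether the exponent on $10^{\Theta+1}$ is at least $r$ or not. Writing $q=\Theta+1$ for brevity, the binomial theorem gives
\[
\left(10^{q}+1\right)^{n}=\sum_{k=0}^{n}\binom{n}{k}10^{kq}
=\sum_{k=r}^{n}\binom{n}{k}10^{kq}\;+\;\sum_{k=0}^{r-1}\binom{n}{k}10^{kq}.
\]
Every term in the first sum carries a factor $10^{kq}$ with $k\ge r$, hence is divisible by $10^{rq}$, so that sum vanishes modulo $10^{rq}$. Therefore $\left(10^{q}+1\right)^{n}\equiv\sum_{k=0}^{r-1}\binom{n}{k}10^{kq}\pmod{10^{rq}}$.

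Next I would rewrite this residue so that it matches the right-hand side of the statement. Using the symmetry $\binom{n}{k}=\binom{n}{n-k}$ and reindexing so the highest power of $10^{q}$ appears first,
\[
\sum_{k=0}^{r-1}\binom{n}{k}10^{kq}
=\binom{n}{n-(r-1)}10^{(r-1)q}+\binom{n}{n-(r-2)}10^{(r-2)q}+\cdots+\binom{n}{n}10^{0},
\]
which is exactly the claimed expression, the last summand being $1$.

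Finally, to upgrade this congruence to an equality between $\left(10^{q}+1\right)^{n}\bmod 10^{rq}$ and the displayed integer, I would invoke Lemma~\ref{pascal1}, which states precisely that this nonnegative integer is strictly less than $10^{rq}=10^{r(\Theta+1)}$. Since the unique element of $\{0,1,\dots,10^{rq}-1\}$ congruent to $\left(10^{q}+1\right)^{n}$ modulo $10^{rq}$ is by definition $\left(10^{q}+1\right)^{n}\bmod 10^{rq}$, the proposition follows. (If $r=0$ the assertion is vacuous since $10^{r(\Theta+1)}=1$, so one may assume $1\le r\le n$.)

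I do not expect a genuine obstacle here: the only step with real content is the range check in the last paragraph, and that is supplied wholesale by Lemma~\ref{pascal1}; the binomial splitting and the $\binom{n}{k}=\binom{n}{n-k}$ reindexing are routine bookkeeping. In effect the proposition is a clean corollary of the lemma together with the observation that the digits of $\left(10^{\Theta+1}+1\right)^{n}$ in base $10^{\Theta+1}$ are the binomial coefficients whenever no coefficient overflows a block of $\Theta+1$ decimal digits.
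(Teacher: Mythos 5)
Your proposal is correct and follows essentially the same route as the paper: expand $\left(10^{\Theta+1}+1\right)^{n}$ by the binomial theorem, discard the terms divisible by $10^{r(\Theta+1)}$, and invoke Lemma~\ref{pascal1} to certify that the remaining tail is the least nonnegative residue. Your write-up is slightly more explicit about the divisibility of the high-order terms and the range check, but the argument is the same.
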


\begin{proof}
Expanding $\left(10^{\Theta+1}+1\right)^{n}$ by binomial theorem, we have
\begin{flalign*}
&\left(10^{\Theta+1}+1\right)^{n}\mod 10^{r(\Theta+1)}\\
&=[10^{n(\Theta+1)}+\binom{n}{1}10^{(n-1)(\Theta+1)}+\cdots
+\binom{n}{n-r}10^{(r)(\Theta+1)}+\\
&\binom{n}{n-(r-1)}10^{(r-1)(\Theta+1)}
+\binom{n}{n-(r-2)}10^{(r-2)(\Theta+1)}+\cdots+1]
\mod 10^{r(\Theta+1)}\\
&=\binom{n}{n-(r-1)}10^{(r-1)(\Theta+1)}+\binom{n}{n-(r-2)}10^{(r-2)(\Theta+1)}+\cdots+1,
\end{flalign*}
since by the Lemma \ref{pascal1}, we have
\begin{eqnarray*}
\binom{n}{n-(r-1)}10^{(r-1)(\Theta+1)}+\binom{n}{n-(r-2)}10^{(r-2)(\Theta+1)}+\cdots+1< 10^{r(\Theta+1)}.
\end{eqnarray*}
\end{proof}

\begin{Cor}
\label{pascal3}
The integer $\binom{n}{n-(r-1)}10^{(r-1)(\Theta+1)}+\binom{n}{n-(r-2)}10^{(r-2)(\Theta+1)}+\cdots+1$ has at best $r(\Theta+1)$ significant digits.
\end{Cor}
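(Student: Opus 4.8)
The plan is to read the statement off directly from Lemma \ref{pascal1}, since the corollary is essentially a restatement of that inequality in the language of decimal digits. Recall that a positive integer $X$ has $m$ significant decimal digits precisely when $10^{m-1}\le X<10^{m}$; in particular $X$ has \emph{at most} $m$ digits exactly when $X<10^{m}$. So it suffices to establish the single inequality $X<10^{r(\Theta+1)}$ for the integer $X=\binom{n}{n-(r-1)}10^{(r-1)(\Theta+1)}+\binom{n}{n-(r-2)}10^{(r-2)(\Theta+1)}+\cdots+1$.

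First I would observe that $X$ is a genuine positive integer — each summand is a nonnegative integer and the final term is $1$, so $X\ge 1$ and the notion of ``number of digits'' is well defined. Then I would simply invoke Lemma \ref{pascal1}, which asserts exactly $X<10^{r(\Theta+1)}$. Combined with the digit characterization above, this shows $X$ has at most $r(\Theta+1)$ significant digits, which is the claim; the hedge ``at best'' covers the case where the leading coefficients $\binom{n}{n-(r-1)},\ldots$ are small or the top block vanishes, so that $X$ in fact occupies strictly fewer positions.

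There is essentially no obstacle: the analytic work has already been done inside Lemma \ref{pascal1}, and the only point requiring a moment's care is the usual boundary issue distinguishing $\lceil\log_{10}X\rceil$ from $\lfloor\log_{10}X\rfloor+1$ when $X$ is an exact power of $10$. But because we only need an upper bound on the digit count and Lemma \ref{pascal1} supplies a \emph{strict} inequality, this subtlety never intervenes. If desired, I would close with a remark tying the corollary to the examples: by Proposition \ref{pascal2} the bottom $r$ blocks of width $\Theta+1$ in the base-$10$ expansion of $\left(10^{\Theta+1}+1\right)^{n}$ are precisely $\binom{n}{n-(r-1)},\ldots,\binom{n}{n}$, and the present corollary guarantees these occupy no more than $r(\Theta+1)$ digit positions in total, so consecutive blocks never overflow into one another — which is exactly what makes the digit-partitioning procedure legitimate.
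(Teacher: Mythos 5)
Your proposal is correct and is essentially the paper's argument: both reduce the claim to the single strict bound $X<10^{r(\Theta+1)}$ for $X=\binom{n}{n-(r-1)}10^{(r-1)(\Theta+1)}+\cdots+1$, and then read off the digit count. The only cosmetic difference is that you cite Lemma \ref{pascal1} directly, while the paper cites Proposition \ref{pascal2} (``$X$ is the remainder modulo $10^{r(\Theta+1)}$, hence smaller than the modulus''), which itself rests on the same inequality from Lemma \ref{pascal1}, so the two routes coincide in substance.
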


\begin{proof}
Since $\binom{n}{n-(r-1)}10^{(r-1)(\Theta+1)}+\binom{n}{n-(r-2)}10^{(r-2)(\Theta+1)}+\cdots+1$ is the remainder when $\left(10^{(\Theta+1)}+1\right)^{n}$ is moded out by $10^{r(\Theta+1)}$.
\end{proof}

\begin{Cor}
\label{pascal4}
The left most $\left(\Theta+1\right)$ digits of the integer 
\begin{eqnarray*}
\binom{n}{n-(r-1)}10^{(r-1)(\Theta+1)}+\binom{n}{n-(r-2)}10^{(r-2)(\Theta+1)}+\cdots+1
\end{eqnarray*}
 is 
\begin{eqnarray*} 
 \binom{n}{n-(r-1)}=\binom{n}{r-1}.
\end{eqnarray*}
\end{Cor}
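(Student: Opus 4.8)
The plan is to extract the claimed digit block by one integer division. Write $N$ for the integer under consideration,
\[
N=\binom{n}{n-(r-1)}10^{(r-1)(\Theta+1)}+\binom{n}{n-(r-2)}10^{(r-2)(\Theta+1)}+\cdots+1,
\]
and separate off its leading term by setting $S:=N-\binom{n}{n-(r-1)}10^{(r-1)(\Theta+1)}=\binom{n}{n-(r-2)}10^{(r-2)(\Theta+1)}+\cdots+1$. First I would observe that $S$ is exactly the left-hand side of Lemma \ref{pascal1} with $r$ replaced by $r-1$ (and, when $r=1$, $S$ is the empty sum, so $S=0$ and $0<10^{0}$ trivially). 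Lemma \ref{pascal1} therefore yields $0\le S<10^{(r-1)(\Theta+1)}$, which is precisely the assertion that $\binom{n}{n-(r-1)}$ is the quotient and $S$ the remainder when $N$ is divided by $10^{(r-1)(\Theta+1)}$; equivalently $\binom{n}{n-(r-1)}=\bigl\lfloor N/10^{(r-1)(\Theta+1)}\bigr\rfloor$, and by definition this quotient is the integer whose decimal expansion consists of the digits of $N$ sitting in positions $(r-1)(\Theta+1)$ and higher.

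Next I would check that this quotient occupies exactly a block of $\Theta+1$ digits. By Corollary \ref{pascal3} we have $N<10^{r(\Theta+1)}$, hence $\binom{n}{n-(r-1)}\le N/10^{(r-1)(\Theta+1)}<10^{\Theta+1}$ (the same bound also follows directly from inequalities (2) and (3), since $\binom{n}{n-(r-1)}=\binom{n}{r-1}\le\binom{n}{\lfloor n/2\rfloor}\le 10^{\Theta+1}-1$). Thus $\binom{n}{n-(r-1)}$ is a nonnegative integer with at most $\Theta+1$ decimal digits, so --- padding on the left with zeros if it has fewer --- it fills exactly the digit positions $(r-1)(\Theta+1),(r-1)(\Theta+1)+1,\ldots,r(\Theta+1)-1$ of $N$. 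Since $N$ itself has at most $r(\Theta+1)$ digits by Corollary \ref{pascal3}, there are no further nonzero digits above that block, so it is precisely the leftmost $\Theta+1$ digits of $N$, and its value is $\binom{n}{n-(r-1)}=\binom{n}{r-1}$, as claimed.

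The division argument is routine, so the only real care is in the bookkeeping. The main obstacle is making precise what ``the leftmost $(\Theta+1)$ digits'' should mean when $\binom{n}{r-1}$ has strictly fewer than $\Theta+1$ digits: one must read the relevant block of $N$ with leading zeros, and it is inequality (2) that guarantees the block never spills past $\Theta+1$ positions while Corollary \ref{pascal3} guarantees nothing nonzero lies to its left. The degenerate case $r=1$ (where $N=1$ and $S$ is empty) should be stated explicitly so that the inequality $0\le S<10^{0}$ is read correctly.
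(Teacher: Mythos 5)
Your proof is correct and follows essentially the same route as the paper's: split off the leading term, bound the tail $S$ by $10^{(r-1)(\Theta+1)}$ (the paper cites Corollary \ref{pascal3} ``similarly,'' you invoke Lemma \ref{pascal1} with $r-1$, which is the same bound) and bound the whole number by $10^{r(\Theta+1)}$, so the top block of $\Theta+1$ digits is $\binom{n}{r-1}$. Your version is merely more explicit about the floor-division bookkeeping, the leading-zero padding, and the $r=1$ case, which the paper glosses over.
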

\begin{proof}
By Corollary \ref{pascal3}, the integer 
\begin{eqnarray*}
\binom{n}{n-(r-1)}10^{(r-1)(\Theta+1)}+\binom{n}{n-(r-2)}10^{(r-2)(\Theta+1)}+\cdots+1
\end{eqnarray*}
has at most $r(\Theta+1)$ significant digits, and similarly 
\begin{eqnarray*}
\binom{n}{n-(r-2)}10^{(r-2)(\Theta+1)}+\binom{n}{n-(r-3)}10^{(r-3)(\Theta+1)}+\cdots+1
\end{eqnarray*}
has at most $(r-1)(\Theta+1)$ significant digits. But 
$\binom{n}{n-(r-1)}10^{(r-1)(\Theta+1)}$ has $(r-1)(\Theta+1)$  zeros to the right, the left most $(r)(\Theta+1)-(r-1)(\Theta+1)=\Theta+1$ digits gives 
\begin{eqnarray*} 
 \binom{n}{n-(r-1)}=\binom{n}{r-1}.
\end{eqnarray*}
\end{proof}

\begin{thm}
The $r$-th block of $(\Theta+1)$ digits from the right of the integer $(1\Theta1)^{n}$ is the binomial coefficient $\binom{n}{r-1}$, where $\Theta=\lfloor\log_{10} \binom{n}{\lfloor\frac{n}{2}\rfloor}\rfloor$. 
\end{thm}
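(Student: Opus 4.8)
The plan is to show that the final theorem is essentially a repackaging of Corollary \ref{pascal4}, combined with the observation that extracting the $r$-th block of $(\Theta+1)$ digits from the right is the same as taking an appropriate residue and then dividing. First I would fix $n$ and $r$ with $1\le r\le n+1$, and recall from the remark preceding Lemma \ref{pascal1} that $(1\Theta1)^{n}=\left(10^{\Theta+1}+1\right)^{n}$. By definition, the $r$-th block of $(\Theta+1)$ digits from the right of an integer $N$ is obtained by first reducing $N$ modulo $10^{r(\Theta+1)}$ (which strips off everything to the left of the first $r$ blocks), and then dividing by $10^{(r-1)(\Theta+1)}$ and discarding any remainder — i.e. it equals $\big\lfloor \big(N \bmod 10^{r(\Theta+1)}\big)/10^{(r-1)(\Theta+1)}\big\rfloor$.

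Next I would invoke Proposition \ref{pascal2} to identify $N \bmod 10^{r(\Theta+1)}$ explicitly as
\[
\binom{n}{n-(r-1)}10^{(r-1)(\Theta+1)}+\binom{n}{n-(r-2)}10^{(r-2)(\Theta+1)}+\cdots+1,
\]
and then apply Corollary \ref{pascal4}, which says precisely that the left-most $(\Theta+1)$ digits of this quantity are $\binom{n}{n-(r-1)}=\binom{n}{r-1}$. Since by Corollary \ref{pascal3} this quantity has at most $r(\Theta+1)$ significant digits, its left-most $(\Theta+1)$ digits are exactly the $r$-th block of $(\Theta+1)$ digits counted from the right; equivalently, dividing it by $10^{(r-1)(\Theta+1)}$ and taking the floor yields $\binom{n}{r-1}$, because the summands with index $(r-2),(r-3),\dots$ together contribute a value strictly less than $10^{(r-1)(\Theta+1)}$ (this is Lemma \ref{pascal1} applied with $r$ replaced by $r-1$). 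Chaining these equalities gives that the $r$-th block of $(1\Theta1)^{n}$ is $\binom{n}{r-1}$, which is the claim.

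The only genuine subtlety — and the step I would be most careful about — is the boundary behavior: one must check that each binomial coefficient $\binom{n}{r-1}$ genuinely fits inside its allotted $(\Theta+1)$-digit block, i.e. that no coefficient has more than $\Theta+1$ digits. This is exactly inequality (2)–(3) in the excerpt, since $\binom{n}{r-1}\le\binom{n}{\lfloor n/2\rfloor}<10^{\Theta+1}$, so no block ever overflows and the digit-extraction is unambiguous; I would state this explicitly as the reason the partitioning is well-defined. A secondary point worth a sentence is the edge case $\binom{n}{\lfloor n/2\rfloor}=10^{\Theta+1}$ excluded implicitly by the ceiling/floor juggling in the definition of $\Theta$ — but since $\binom{n}{\lfloor n/2\rfloor}$ is never an exact power of $10$ for $n\ge 1$ (a central binomial coefficient is even for $n\ge 2$ and divisible by primes other than $2$ and $5$), the formula $\Theta=\lfloor\log_{10}\binom{n}{\lfloor n/2\rfloor}\rfloor$ does give the correct digit count and the argument goes through without caveat. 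Apart from that, the proof is a short assembly of the three corollaries and needs no new computation.
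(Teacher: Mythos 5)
Your proposal is correct and follows essentially the same route as the paper: the paper's own proof of the theorem is a one-line appeal to Corollary \ref{pascal3} and Corollary \ref{pascal4} (which rest on Lemma \ref{pascal1} and Proposition \ref{pascal2}), and you simply make that chain explicit by writing the $r$-th block as $\bigl\lfloor\bigl(N \bmod 10^{r(\Theta+1)}\bigr)/10^{(r-1)(\Theta+1)}\bigr\rfloor$, identifying the residue via Proposition \ref{pascal2}, and using inequalities (2)--(3) to check that no coefficient overflows its $(\Theta+1)$-digit block --- a useful amount of detail that the paper leaves implicit. One parenthetical in your last paragraph is factually wrong, though harmlessly so: $\binom{n}{\lfloor n/2\rfloor}$ \emph{can} be an exact power of $10$ (e.g.\ $\binom{5}{2}=10$, and $\binom{1}{0}=1=10^{0}$), and central binomial coefficients need not be even ($\binom{3}{1}=3$, $\binom{7}{3}=35$). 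The edge case you worry about needs no exclusion at all: for every positive integer $X$, powers of ten included, $\lfloor\log_{10}X\rfloor+1$ equals the number of decimal digits of $X$, so $\Theta+1$ is exactly the digit length of the central entry and the block extraction is well defined without any caveat; you should delete the false justification rather than rely on it.
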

\begin{proof}
The proof follows from the Corollary \ref{pascal3} and the Corollary \ref{pascal4}. Hence partitioning the digits of the integer $(1\Theta1)^{n}$ generates all the binomial coefficients or the $(n+1)^{\text{th}}$ row of the Pascal's triangle.
\end{proof}

\section{Conclusion}
Sir Isaac Newton hinted that binomial coefficients in the 
$(n+1)^{\text{th}}$ row of the Pascal's triangle may be achieved from partitioning the digits in the $n^{\text{th}}$ power of some variation of $11$ \cite{newton1736}. It has been shown earlier the weighted sum of the values in the $(n+1)^{\text{th}}$ row of the Pascal's triangle is $(11)^{n}$ \cite{Arnold}. We have shown that $\left( \Theta +1\right)$ digit partition of $(1 \Theta1)^{n}$ from the right gives the values of the $(n+1)^{\text{th}}$ row of the Pascal's triangle.\\[5ex]


\vspace{0.25in}
\bibliographystyle{apa}

\end{document}